\definecolor {refcol}{RGB}{40,0,255}
\newfont{\footsc}{cmcsc10 at 8truept}
\newfont{\footbf}{cmbx10 at 8truept}
\newfont{\footrm}{cmr10 at 10truept}
\newtheorem{theorem}{Theorem}
\newtheorem{problem}[theorem]{Problem}
\newtheorem{proposition}[theorem]{Proposition}
\newenvironment{proof}[1][Proof]{\noindent{\textbf {#1}  }}  {\hfill$\Box$\bigskip}
\begin{document}

\title{\textbf{Hoffman's bound for hypergraphs}}
\author{V. Nikiforov\thanks{Department of Mathematical Sciences, University of
Memphis, Memphis TN 38152, USA. Email: \textit{vnikifrv@memphis.edu}}}
\date{}
\maketitle

\begin{abstract}
One of the best-known results in spectral graph theory is the inequality of
Hoffman%
\[
\chi\left(  G\right)  \geq1-\frac{\lambda\left(  G\right)  }{\lambda_{\min
}\left(  G\right)  },
\]
where $\chi\left(  G\right)  $ is the chromatic number of a graph $G$ and
$\lambda\left(  G\right)  ,$ $\lambda_{\min}\left(  G\right)  $ are the
largest and the smallest eigenvalues of its adjacency matrix.

In this note Hoffman's inequality is extended to weighted uniform $r$-graphs
for every even $r$.\medskip

\textbf{Keywords: }\textit{hypergraph eigenvalues; Hoffman's bound; }%
$k$\textit{-partite hypergraph; weighted hypergraph. }

\textbf{AMS classification: }\textit{15A42}

\end{abstract}

\section{Introduction and main results}

Let $G$ be a graph with chromatic number $\chi\left(  G\right)  $, and let
$\lambda\left(  G\right)  $ and $\lambda_{\min}\left(  G\right)  $ be the
largest and the smallest adjacency eigenvalues of $G.$ A famous result of
Hoffman \cite{Hof70} asserts that
\begin{equation}
\chi\left(  G\right)  \geq1-\frac{\lambda\left(  G\right)  }{\lambda_{\min
}\left(  G\right)  }. \label{hin}%
\end{equation}
Inequality (\ref{hin}) is tight, as equality holds for a vast class of graphs
(see \cite{GoRo01}, Lemma 9.6.2), in particular, for regular complete
multipartite graphs.

The main goal of this paper is to extend inequality (\ref{hin}) to weighted
$r$-uniform hypergraphs for every even $r\geq2$.

Let $r\geq2$ and $G$ be an $r$-uniform hypergraph ($r$\emph{-graph} for short)
with vertex set $V\left(  G\right)  $ and edge set $E\left(  G\right)  .$ For
simplicity, we let $V\left(  G\right)  =\left[  n\right]  $, where $\left[
n\right]  :=\left\{  1,\ldots,n\right\}  $.

If a function $w:E\left(  G\right)  \rightarrow\mathbb{R}$ is associated with
$G$, then $G$ is called a \emph{weighted }$r$\emph{-graph} and is denoted by
$G_{w}$. If for an $r$-graph $G$ the function $w$ is not given explicitly, it
is assumed that $w\left(  e\right)  =1$ for each $e\in E\left(  G\right)  $.

Given a weighted $r$-graph $G_{w}$, write $w_{i_{1},\ldots,i_{r}}$ for
$w\left(  \left\{  i_{1},\ldots,i_{r}\right\}  \right)  $ for any $\left\{
i_{1},\ldots,i_{r}\right\}  \in E\left(  G_{w}\right)  $. The \emph{polynomial
form} $P_{G_{w}}$ of $G_{w}$ is a function $P_{G_{w}}:\mathbb{R}%
^{n}\rightarrow\mathbb{R}$ defined for any vector real $n$-vector
$\mathbf{x}:=\left(  x_{1},\ldots,x_{n}\right)  $ as
\[
P_{G_{w}}\left(  \mathbf{x}\right)  :=r!\sum_{\left\{  i_{1},\ldots
,i_{r}\right\}  \in E\left(  G\right)  }w_{i_{1},\ldots,i_{r}}x_{i_{1}}\cdots
x_{i_{r}}%
\]

Note that if $G$ is a $2$-graph and $w\left(  e\right)  =1$ for any $e\in
E\left(  G\right)  ,$ then $P_{G}\left(  \mathbf{x}\right)  $ is the quadratic
form of the adjacency matrix of $G$, and the Rayleigh-Ritz theorem (see
\cite{Hof70}, Theorem 4.2.4) asserts that
\[
\lambda\left(  G\right)  =\max_{x_{1}^{2}+\cdots+x_{n}^{2}=1}\text{ }%
P_{G}\left(  \mathbf{x}\right)  \text{ \ \ and \ \ }\lambda_{\min}\left(
G\right)  =\min_{x_{1}^{2}+\cdots+x_{n}^{2}=1}\text{ }P_{G}\left(
\mathbf{x}\right)  .
\]
In this vein, for any real $p\geq1$, let $\left\vert \mathbf{x}\right\vert
_{p}$ stand for the $l^{p}$ norm of a vector $\mathbf{x}$, and
define\footnote{The parameter $\lambda^{\left(  p\right)  }\left(  G\right)  $
was introduced by Keevash, Lenz and Mubayi in \cite{KLM13} and $\lambda_{\min
}^{\left(  p\right)  }\left(  G\right)  $ was introduced in \cite{Nik14}. In
addition, $\lambda^{\left(  r\right)  }\left(  G\right)  $ is known as the
\emph{spectral radius} of $G$ (see \cite{CoDu11}, \cite{Lim05}, \cite{Qi05},
\cite{Qi13}) and if $r$ is even, then $\lambda_{\min}^{\left(  r\right)
}\left(  G\right)  $ coincides with the \emph{smallest }$H$\emph{-eigenvalue}
of $G$ (see \cite{Qi05}, \cite{Qi13}).}
\[
\lambda^{\left(  p\right)  }\left(  G_{w}\right)  :=\max_{\left\vert
\mathbf{x}\right\vert _{p}=1}\text{ }P_{G_{w}}\left(  \mathbf{x}\right)
\]
and
\[
\lambda_{\min}^{\left(  p\right)  }\left(  G_{w}\right)  :=\min_{\left\vert
\mathbf{x}\right\vert _{p}=1}\text{ }P_{G_{w}}\left(  \mathbf{x}\right)  .
\]

Simple examples show that $\lambda^{\left(  p\right)  }\left(  G_{w}\right)
>0$ and $\lambda_{\min}^{\left(  p\right)  }\left(  G_{w}\right)  <0$, unless
$E\left(  G\right)  =\varnothing$ or $w\left(  e\right)  =0$ for all $e\in
E\left(  G\right)  $; to avoid trivialities these two cases are excluded hereafter.

Observe also that the inequality%
\begin{equation}
P_{G_{w}}\left(  \mathbf{x}\right)  \geq\lambda_{\min}\left(  G_{w}\right)
\left\vert \mathbf{x}\right\vert _{p}^{r} \label{min}%
\end{equation}
holds for every $\mathbf{x}\in\mathbb{R}^{n}$.\medskip

An $r$-graph $G\ $is called $k$\emph{-partite}\textbf{ }if $V\left(  G\right)
$ can be partitioned into $k$ sets so that each edge contains at most one
vertex from each partition set. Likewise, $G\ $is called $k$\emph{-chromatic}%
\textbf{ }if $V\left(  G\right)  $ can be partitioned into $k$ sets so that no
partition set contains an edge.\medskip

Writing $K_{n}^{r}$ for the complete $r$-graph of order $n$, inequality
(\ref{min}) is extended as follows:

\begin{theorem}
\label{th1}Let $r\geq2$ be an even integer, let $k\geq r$ be an integer and
$p\geq r$ be a real number.\ If $G_{w}$ is a weighted $k$-partite $r$-graph,
then%
\begin{equation}
\frac{\lambda^{\left(  p\right)  }\left(  G_{w}\right)  }{\lambda_{\min
}^{\left(  p\right)  }\left(  G_{w}\right)  }\geq\frac{\lambda^{\left(
p\right)  }\left(  K_{k}^{r}\right)  }{\lambda_{\min}^{\left(  p\right)
}\left(  K_{k}^{r}\right)  }.\label{rhin}%
\end{equation}
If $G$ is a complete regular $k$-partite $r$-graph, then equality holds in
(\ref{rhin}).
\end{theorem}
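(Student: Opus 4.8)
The plan is to imitate the classical proof of Hoffman's bound, where one restricts the quadratic form to the subspace spanned by indicator vectors of the color classes. First I would fix an eigenvector-type optimizer: let $\mathbf{y}\in\mathbb{R}^n$ with $|\mathbf{y}|_p=1$ and $P_{G_w}(\mathbf{y})=\lambda^{(p)}(G_w)$ (this maximum is attained since the unit $l^p$-sphere is compact). Let $V_1,\dots,V_k$ be the parts of a $k$-partition of $G$. The key substitution is to replace $\mathbf{y}$ by a ``blended'' vector built from the $k$ partition classes: for each class $j$, rescale the entries of $\mathbf{y}$ on $V_j$ to have a prescribed $l^p$-mass, then multiply by a common free parameter $t_j$, and finally optimize over $(t_1,\dots,t_k)$. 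More precisely, I would introduce the $k$ ``block vectors'' $\mathbf{y}^{(1)},\dots,\mathbf{y}^{(k)}$, where $\mathbf{y}^{(j)}$ agrees with $\mathbf{y}$ on $V_j$ and is $0$ elsewhere, and consider vectors of the form $\mathbf{z}=\sum_{j=1}^k t_j \mathbf{y}^{(j)}$. Because $r$ is even and the parts are independent-edge-wise only in the $k$-partite sense (each edge meets each class at most once), expanding $P_{G_w}(\mathbf{z})$ picks out exactly the monomials in which the $r$ chosen vertices lie in $r$ distinct classes, and each such term carries a factor $t_{j_1}\cdots t_{j_r}$ with distinct indices — exactly the structure of $P_{K_k^r}$ evaluated at $(c_1 t_1,\dots,c_k t_k)$ for suitable nonnegative constants $c_j$ depending on the $l^p$-masses of $\mathbf{y}$ on each block.

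The second step is the matching lower-bound estimate. Using inequality (\ref{min}) blockwise, or rather applying the definition of $\lambda_{\min}^{(p)}(G_w)$ to the vector $\mathbf{z}$ with sign-flips on the blocks, I would bound $P_{G_w}(\mathbf{z})$ from below by $\lambda_{\min}^{(p)}(G_w)\,|\mathbf{z}|_p^r$. The point is that $|\mathbf{z}|_p^p = \sum_j |t_j|^p\,|\mathbf{y}^{(j)}|_p^p$, and since $|\mathbf{y}|_p=1$ the block masses $a_j:=|\mathbf{y}^{(j)}|_p^p$ sum to $1$. After normalizing the block vectors to unit $l^p$-norm — writing $\mathbf{y}^{(j)} = a_j^{1/p}\mathbf{u}^{(j)}$ with $|\mathbf{u}^{(j)}|_p = 1$ — the whole problem becomes: for the fixed probability weights $a_1,\dots,a_k$,
\[
\lambda^{(p)}(G_w) \;=\; P_{G_w}(\mathbf{y}) \;\le\; \max_{\mathbf{s}} \frac{P_{G_w}\!\big(\textstyle\sum_j s_j \mathbf{u}^{(j)}\big)}{\big|\sum_j s_j \mathbf{u}^{(j)}\big|_p^{\,r}}\,\Big/\, \frac{1}{\text{(something)}},
\]
and the ratio $\lambda^{(p)}/\lambda_{\min}^{(p)}$ of the blended form is governed by the corresponding ratio for the ``reduced'' $r$-graph on $k$ vertices with those masses. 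I would then argue that among all ways of distributing the mass, the extremal ratio for the reduced form is at least that of $K_k^r$ with uniform masses $a_j = 1/k$, invoking the last sentence of the theorem (that $K_k^r$ regular complete $k$-partite achieves equality) as the benchmark; concretely, $P_{G_w}$ restricted to the blend is dominated in absolute value by $P_{K_k^r}$ composed with a coordinate scaling, and scaling does not change the sign pattern that controls the max/min ratio.

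The main obstacle, I expect, is the comparison in the reduced $k$-dimensional problem: one must show that for an arbitrary choice of mass weights $a_j$ and block directions $\mathbf{u}^{(j)}$, the pair $\big(\max_{\mathbf s}, \min_{\mathbf s}\big)$ of the homogeneous degree-$r$ form $Q(\mathbf{s}):=P_{G_w}(\sum_j s_j \mathbf u^{(j)})$ over the $l^p$-unit sphere has ratio at least $\lambda^{(p)}(K_k^r)/\lambda_{\min}^{(p)}(K_k^r)$. The natural route is: (i) observe $Q$ is a ``sub-form'' of the complete $r$-partite pattern, so $|Q(\mathbf s)| \le P_{K_k^r}(|c_1 s_1|,\dots,|c_k s_k|)$ with $c_j\ge 0$; (ii) use that for the complete $k$-partite $r$-graph the max is attained at the all-positive equidistributed vector and the min at a vector with an appropriate odd split of signs among classes — here evenness of $r$ is essential, since it guarantees the min is negative and is achieved by flipping signs on a suitable number of classes, and it makes the sign bookkeeping in $t_{j_1}\cdots t_{j_r}$ behave; (iii) conclude the ratio bound by a direct optimization for $K_k^r$, which is a single explicit calculation in $k$ real variables (by symmetry reducing to two parameters: the common value on ``positive'' classes and on ``negative'' classes). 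For the equality clause, I would simply take $G$ itself to be the regular complete $k$-partite $r$-graph, note that then the blend construction is lossless (every ``forbidden'' monomial is already absent), and the reduction to $K_k^r$ is an equality rather than an inequality, so (\ref{rhin}) holds with equality.
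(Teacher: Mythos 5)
Your block decomposition is sound as far as it goes: writing the maximizer as $\mathbf{y}=\sum_j a_j^{1/p}\mathbf{u}^{(j)}$ and passing to the blended form $Q(\mathbf{s}):=P_{G_w}\bigl(\sum_j s_j\mathbf{u}^{(j)}\bigr)$ correctly reduces the theorem to the case $n=k$, i.e.\ to an arbitrary weighted subgraph of $K_k^r$ (one has $\max_{|\mathbf{s}|_p=1}Q=\lambda^{(p)}(G_w)$ and $\min_{|\mathbf{s}|_p=1}Q\ge\lambda_{\min}^{(p)}(G_w)$). But this reduction leaves the entire difficulty intact, and your steps (i)--(iii) for the reduced problem do not close it. The coefficients of $Q$ are sums of terms $r!\,w_{i_1\ldots i_r}u_{i_1}\cdots u_{i_r}$ over edges with a prescribed class pattern; they are arbitrary reals of either sign and do not factor as products $c_{j_1}\cdots c_{j_r}$, so the domination $|Q(\mathbf{s})|\le P_{K_k^r}(|c_1s_1|,\ldots,|c_ks_k|)$ is not available in any useful form. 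More importantly, even granting such a domination, an upper bound on $|Q|$ only bounds $\max Q$ from above and $\min Q$ from below, which is useless for the ratio: what must be shown is that a large positive maximum \emph{forces} a suitably negative minimum, and nothing in (i)--(iii) produces a negative test value for $Q$. Finally, step (iii) presumes that the minimizer of $P_{K_k^r}$ on the $l^p$-sphere is a two-level sign-split vector; this is unproven (the paper explicitly notes that $\lambda_{\min}^{(p)}(K_k^r)$ is not known), and the theorem cannot rest on it.

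The paper supplies exactly the missing mechanism, and runs in the opposite direction: it takes the \emph{minimizer} $\mathbf{y}\in\mathbb{R}^k$ of $P_{K_k^r}$ and the maximizer $\mathbf{x}$ of $P_{G_w}$, forms for each permutation $\sigma$ of $[k]$ the test vector with entries $z_i=x_iy_{\sigma(\eta(i))}$, and applies (\ref{min}) to each $\mathbf{z}_\sigma$. Averaging over all $k!$ permutations makes both sides computable without knowing where the extrema of $K_k^r$ are attained: a counting argument gives $\sum_\sigma P_{G_w}(\mathbf{z}_\sigma)=(k-r)!\,\lambda^{(p)}(G_w)\,\lambda_{\min}^{(p)}(K_k^r)$, and the Power Mean inequality bounds $\sum_\sigma|\mathbf{z}_\sigma|_p^r$ above by $k^{1-r/p}(k-1)!$, which is then matched against the exact value of $\lambda^{(p)}(K_k^r)$. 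Some averaging device of this kind (the hypergraph analogue of the tensor-product trick in Hoffman's and Lov\'asz's arguments) is what your outline lacks; without it the comparison of a general weighted $k$-partite form with $K_k^r$ does not go through. Your equality clause is also only a sketch: one must both lift the extremal vectors of $K_k^r$ to $G$ and compress the extremal vectors of $G$ back to $K_k^r$, the latter requiring a Power Mean estimate because the extremal vector of $G$ need not be constant on classes.
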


Taking $r=p=2$ and $w\left(  e\right)  =1$ for every $e\in E\left(  G\right)
,$ and noting that $\lambda^{\left(  2\right)  }\left(  K_{k}^{2}\right)
=k-1$ and $\lambda_{\min}^{\left(  2\right)  }\left(  K_{k}^{2}\right)  =-1,$
we obtain Hoffman's inequality (\ref{hin}), so Theorem \ref{th1} indeed
generalizes that result. In fact, the introduction of edge weights extends a
matrix version of (\ref{min}) due to Lov\'{a}sz \cite{Lov79}.

The right side of (\ref{rhin}) needs some clarification. On the one hand, it
is known that
\[
\lambda\left(  K_{k}^{r}\right)  =\left(  k-1\right)  \cdots\left(
k-r+1\right)  k^{r/p},
\]
see, e.g., the proof of Theorem \ref{th1} below. However, the value of
$\lambda_{\min}^{\left(  p\right)  }\left(  K_{k}^{r}\right)  $ is not known
precisely yet. In a forthcoming paper we shall determine the order of
magnitude of $\lambda_{\min}^{\left(  p\right)  }\left(  K_{n}^{r}\right)  $.

One can ask if in Theorem \ref{th1} the premise $k$\emph{-partite} can be
replaced by $k$\emph{-chromatic}$.$ Alas, such change is not straightforward,
if possible at all. Indeed, in Section \ref{cex}, we describe an infinite
family of $2$-chromatic $4$-graphs such that the ratio $\lambda^{\left(
p\right)  }\left(  G\right)  /\left\vert \lambda_{\min}^{\left(  p\right)
}\left(  G\right)  \right\vert $\ grows with the order of the graph, and thus
cannot be bounded by a function of the chromatic number of $G$ and
$p$.\medskip

We proceed with the proof of Theorem \ref{th1}, which is based on an idea of
\cite{Nik07}; a similar idea has been used also in \cite{Ken14}.\medskip

\begin{proof}
[\textbf{Proof of Theorem \ref{th1}}]Suppose that $G_{w}$ is of order $n,$ and
let $V_{1},\ldots,V_{k}$ be a partition of $V\left(  G_{w}\right)  $ such that
any edge of $G$ contains at most one vertex from each $V_{1},\ldots,V_{k}$;
for any $v\in V\left(  G_{w}\right)  $, write $\eta\left(  v\right)  $ for the
unique number such that $v\in V_{\eta\left(  v\right)  }$.

Select $\mathbf{x}\in\mathbb{R}^{n},$ $\mathbf{y}\in\mathbb{R}^{k}$ with
$\left\vert \mathbf{x}\right\vert _{p}=\left\vert \mathbf{y}\right\vert
_{p}=1$ and such that $P_{G_{w}}\left(  \mathbf{x}\right)  =\lambda^{\left(
p\right)  }\left(  G_{w}\right)  ,$ $P_{K_{k}^{r}}\left(  \mathbf{y}\right)
=\lambda_{\min}^{\left(  p\right)  }\left(  K_{k}^{r}\right)  $. Write
$\mathbb{P}$ for the set of all permutations $\sigma:$ $\left[  r\right]
\rightarrow\left[  r\right]  $, and for every $\sigma\in\mathbb{P}$, define a
vector $\mathbf{z}_{\sigma}=\left(  z_{1},\ldots,z_{n}\right)  $ by letting%
\[
z_{i}:=x_{i}y_{\sigma\left(  \eta\left(  i\right)  \right)  },\text{
\ \ \ }i=1,\ldots,n\text{. }%
\]
In view of (\ref{min}), we have
\[
P_{G}\left(  \mathbf{z}_{\sigma}\right)  \geq\lambda_{\min}^{\left(  p\right)
}\left(  G\right)  \left\vert \mathbf{z}_{\sigma}\right\vert _{p}^{r}%
=\lambda_{\min}^{\left(  p\right)  }\left(  G\right)  \left(
{\displaystyle\sum\limits_{i\in\left[  n\right]  }}
\left\vert x_{i}\right\vert ^{p}|y_{\sigma\left(  \eta\left(  i\right)
\right)  }|^{p}\right)  ^{r/p}.
\]
Summing this inequality for all $\sigma\in\mathbb{P},$ we get%
\begin{equation}%
{\displaystyle\sum\limits_{\sigma\in\mathbb{P}}}
P_{G}\left(  \mathbf{z}_{\sigma}\right)  \geq\lambda_{\min}\left(  G\right)
{\displaystyle\sum\limits_{\sigma\in\mathbb{P}}}
\left(
{\displaystyle\sum\limits_{i\in\left[  n\right]  }}
\left\vert x_{i}\right\vert ^{p}|y_{\sigma\left(  \eta\left(  i\right)
\right)  }|^{p}\right)  ^{r/p}\text{ }. \label{main}%
\end{equation}
Our main goals are to calculate $%
{\textstyle\sum\limits_{\sigma\in\mathbb{P}}}
P_{G}\left(  \mathbf{z}_{\sigma}\right)  $ and to bound $%
{\textstyle\sum\limits_{\sigma\in\mathbb{P}}}
$ $\left(
{\textstyle\sum\limits_{i\in\left[  n\right]  }}
\left\vert x_{i}\right\vert ^{p}|y_{\sigma\left(  \eta\left(  i\right)
\right)  }|^{p}\right)  ^{r/p}$ from above$.$

First, by definition,%
\begin{align*}%
{\displaystyle\sum\limits_{\sigma\in\mathbb{P}}}
P_{G}\left(  \mathbf{z}_{\sigma}\right)   &  =r!%
{\displaystyle\sum\limits_{\sigma\in\mathbb{P}}}
\sum_{\left\{  i_{1},\ldots,i_{r}\right\}  \in E\left(  G\right)  }%
w_{i_{1},\ldots,i_{r}}x_{i_{1}}\cdots x_{i_{r}}y_{\sigma\left(  \eta\left(
i_{1}\right)  \right)  }\cdots y_{\sigma\left(  \eta\left(  i_{r}\right)
\right)  }\\
&  =r!\sum_{\left\{  i_{1},\ldots,i_{r}\right\}  \in E\left(  G\right)
}w_{i_{1},\ldots,i_{r}}x_{i_{1}}\cdots x_{i_{r}}%
{\displaystyle\sum\limits_{\sigma\in\mathbb{P}}}
y_{\sigma\left(  \eta\left(  i_{1}\right)  \right)  }\cdots y_{\sigma\left(
\eta\left(  i_{r}\right)  \right)  }.
\end{align*}
Fix an edge $\left\{  i_{1},\ldots,i_{r}\right\}  \in E$ and a permutation
$\sigma\in\mathbb{P}$; the numbers $\sigma\left(  \eta\left(  i_{1}\right)
\right)  ,\ldots,\sigma\left(  \eta\left(  i_{r}\right)  \right)  $ are
distinct and thus determine an edge of $K_{k}^{r},$ say $\left\{  j_{1}%
,\ldots,j_{r}\right\}  .$ There are exactly $r!\left(  k-r\right)  !$
permutations in $\mathbb{P}$ mapping $\left\{  \eta\left(  i_{1}\right)
,\ldots,\eta\left(  i_{r}\right)  \right\}  $ onto $\left\{  j_{1}%
,\ldots,j_{r}\right\}  ,$ implying that
\[%
{\displaystyle\sum\limits_{\sigma\in\mathbb{P}}}
y_{\sigma\left(  \eta\left(  i_{1}\right)  \right)  }\cdots y_{\sigma\left(
\eta\left(  i_{r}\right)  \right)  }=r!\left(  k-r\right)  !\sum_{\left\{
j_{1},\ldots,j_{r}\right\}  \in E\left(  K_{k}^{r}\right)  }y_{j_{1}}\cdots
y_{j_{r}}=\left(  k-r\right)  !\lambda_{\min}^{\left(  p\right)  }\left(
K_{k}^{r}\right)  .
\]
Hence,
\begin{equation}%
{\displaystyle\sum\limits_{\sigma\in\mathbb{P}}}
P_{G}\left(  \mathbf{z}_{\sigma}\right)  =\left(  k-r\right)  !\lambda_{\min
}^{\left(  p\right)  }\left(  K_{k}^{r}\right)  r!\sum_{\left\{  i_{1}%
,\ldots,i_{r}\right\}  \in E}x_{i_{1}}\cdots x_{i_{r}}=\left(  k-r\right)
!\lambda^{\left(  p\right)  }\left(  G_{w}\right)  \lambda_{\min}^{\left(
p\right)  }\left(  K_{k}^{r}\right)  . \label{in1}%
\end{equation}
On the other hand, the Power Mean inequality implies that
\begin{align*}%
{\displaystyle\sum\limits_{\sigma\in\mathbb{P}}}
\text{ }\left(
{\displaystyle\sum\limits_{i\in\left[  n\right]  }}
\left\vert x_{i}\right\vert ^{p}|y_{\sigma\left(  \eta\left(  i\right)
\right)  }|^{p}\right)  ^{r/p}  &  \leq\left(  k!\right)  ^{1-r/p}\text{
}\left(
{\displaystyle\sum\limits_{\sigma\in\mathbb{P}}}
\text{ }%
{\displaystyle\sum\limits_{i\in\left[  n\right]  }}
\left\vert x_{i}\right\vert ^{p}|y_{\sigma\left(  \eta\left(  i\right)
\right)  }|^{p}\right)  ^{r/p}\\
&  =\left(  k!\right)  ^{1-r/p}\left(
{\displaystyle\sum\limits_{i\in\left[  n\right]  }}
\left\vert x_{i}\right\vert ^{p}%
{\displaystyle\sum\limits_{\sigma\in\mathbb{P}}}
|y_{\sigma\left(  \eta\left(  i\right)  \right)  }|^{p}\right)  ^{r/p}.
\end{align*}
Fix an $i\in\left[  n\right]  $, and note that for every $j\in\left[
k\right]  $ there are $\left(  k-1\right)  !$ permutations $\sigma
\in\mathbb{P}$ such that $j=\sigma\left(  \eta\left(  i\right)  \right)  .$
Hence,%
\[%
{\displaystyle\sum\limits_{\sigma\in\mathbb{P}}}
|y_{\sigma\left(  \eta\left(  i\right)  \right)  }|^{p}=\left(  k-1\right)  !%
{\displaystyle\sum\limits_{j\in\left[  k\right]  }}
|y_{j}|^{p}=\left(  k-1\right)  !,
\]
and therefore,%
\[%
{\displaystyle\sum\limits_{i\in\left[  n\right]  }}
\left\vert x_{i}\right\vert ^{p}%
{\displaystyle\sum\limits_{\sigma\in\mathbb{P}}}
|y_{\sigma\left(  \eta\left(  i\right)  \right)  }|^{p}=\left(  k-1\right)  !%
{\displaystyle\sum\limits_{i\in\left[  n\right]  }}
\left\vert x_{i}\right\vert ^{p}=\left(  k-1\right)  !.
\]
Thus, we get%
\begin{equation}%
{\displaystyle\sum\limits_{\sigma\in\mathbb{P}}}
\text{ }\left(
{\displaystyle\sum\limits_{i\in\left[  n\right]  }}
\left\vert x_{i}\right\vert ^{p}|y_{\sigma\left(  \eta\left(  i\right)
\right)  }|^{p}\right)  ^{r/p}\leq\left(  k!\right)  ^{1-r/p}\left(  \left(
k-1\right)  !\right)  ^{r/p}=k^{1-r/p}\left(  k-1\right)  ! \label{in2}%
\end{equation}
Substituting $%
{\textstyle\sum\limits_{\sigma\in\mathbb{P}}}
P_{G}\left(  \mathbf{z}_{\sigma}\right)  $ and $%
{\textstyle\sum\limits_{\sigma\in\mathbb{P}}}
$ $\left(
{\textstyle\sum\limits_{i\in\left[  n\right]  }}
\left\vert x_{i}\right\vert ^{p}|y_{\sigma\left(  \eta\left(  i\right)
\right)  }|^{p}\right)  ^{r/p}$ from (\ref{in1}) and (\ref{in2}) into
(\ref{main}), we get%
\begin{equation}
\lambda^{\left(  p\right)  }\left(  G\right)  \lambda_{\min}^{\left(
p\right)  }\left(  K_{k}^{r}\right)  \geq k^{1-r/p}\left(  k-1\right)
\cdots\left(  k-r+1\right)  \lambda_{\min}^{\left(  p\right)  }\left(
G\right)  . \label{gin}%
\end{equation}

To finish the proof of (\ref{rhin}), we show that
\begin{equation}
\lambda\left(  K_{k}^{r}\right)  =k^{1-r/p}\left(  k-1\right)  \cdots\left(
k-r+1\right)  . \label{lam}%
\end{equation}

Indeed, we can select a nonnegative $\mathbf{u}\in\mathbb{R}^{k}$ with
$\left\vert \mathbf{u}\right\vert _{p}=1$ and $P_{K_{k}^{r}}\left(
\mathbf{u}\right)  =\lambda\left(  K_{k}^{r}\right)  $. Now, Maclaurin's
inequality and the Power Mean inequality imply that
\begin{align*}
\lambda\left(  K_{k}^{r}\right)   &  =P_{K_{k}^{r}}\left(  \mathbf{u}\right)
\leq r!\binom{k}{r}\left(  \frac{u_{1}+\cdots+u_{k}}{k}\right)  ^{r}\leq
k\left(  k-1\right)  \cdots\left(  k-r+1\right)  \left(  \frac{u_{1}%
^{p}+\cdots+u_{k}^{p}}{k}\right)  ^{r/p}\\
&  =k^{1-r/p}\left(  k-1\right)  \cdots\left(  k-r+1\right)  ,
\end{align*}
as claimed.

Clearly, inequality (\ref{rhin}) follows from (\ref{gin}) and (\ref{lam}),
because $\lambda_{\min}^{\left(  p\right)  }\left(  G\right)  <0$ and
$\lambda_{\min}^{\left(  p\right)  }\left(  K_{k}^{r}\right)  <0$.\medskip

It remains to prove that equality holds in (\ref{rhin}) for regular complete
$k$-partite $r$-graphs. Let $G$ be such a graph and let $V_{1},\ldots,V_{k}$
be a partition of $V\left(  G\right)  $ such that any edge of $G$ contains at
most one vertex from each $V_{1},\ldots,V_{k}$. The regularity of $G$ implies
that $\left\vert V_{1}\right\vert =\cdots=\left\vert V_{k}\right\vert ,$ and
so $t=\left\vert V\left(  G\right)  \right\vert /k$ is an integer. As above,
for any $v\in V\left(  G\right)  $, write $\eta\left(  v\right)  $ for the
unique number such that $v\in V_{\eta\left(  v\right)  }$.

Our proof is straightforward: we show that
\begin{equation}
\lambda_{\min}^{\left(  p\right)  }\left(  G\right)  =t^{r-r/p}\lambda_{\min
}^{\left(  p\right)  }\left(  K_{k}^{r}\right)  \label{eq1}%
\end{equation}
and%
\begin{equation}
\lambda^{\left(  p\right)  }\left(  G\right)  =t^{r-r/p}\lambda^{\left(
p\right)  }\left(  K_{k}^{r}\right)  \label{eq2}%
\end{equation}
and these two equations imply that equality holds in (\ref{rhin}).

Select $\mathbf{y}\in\mathbb{R}^{k}$ with $\left\vert \mathbf{y}\right\vert
_{p}=1$ and $P_{K_{k}^{r}}\left(  \mathbf{y}\right)  =\lambda_{\min}^{\left(
p\right)  }\left(  K_{k}^{r}\right)  $ and define a vector $\mathbf{x}%
:=\left(  x_{1},\ldots,x_{n}\right)  $ by letting%
\[
x_{i}=y_{\eta\left(  i\right)  }t^{-1/p},\text{ \ \ \ }i=1,\ldots,n.
\]
On the one hand, we see that
\[%
{\displaystyle\sum\limits_{i\in\left[  n\right]  }}
\left\vert x_{i}\right\vert ^{p}=%
{\displaystyle\sum\limits_{i\in\left[  k\right]  }}
\text{ }%
{\displaystyle\sum\limits_{j\in\left[  V_{i}\right]  }}
\left\vert x_{j}\right\vert ^{p}=%
{\displaystyle\sum\limits_{i\in\left[  k\right]  }}
t\left\vert y_{i}t^{-1/p}\right\vert ^{p}=%
{\displaystyle\sum\limits_{i\in\left[  k\right]  }}
\left\vert y_{i}\right\vert ^{p}=1.
\]
On the other hand,%
\[
P_{G}\left(  \mathbf{x}\right)  =r!t^{r}%
{\displaystyle\sum\limits_{\left\{  j_{1},\ldots,j_{r}\right\}  \in E\left(
K_{k}^{r}\right)  }}
x_{j_{1}}\cdots x_{j_{r}}=r!t^{r-r/p}%
{\displaystyle\sum\limits_{\left\{  j_{1},\ldots,j_{r}\right\}  \in E\left(
K_{k}^{r}\right)  }}
y_{j_{1}}\cdots y_{j_{r}}=t^{r-r/p}\lambda_{\min}^{\left(  p\right)  }\left(
K_{k}^{r}\right)  .
\]
Thus,%
\begin{equation}
\lambda_{\min}^{\left(  p\right)  }\left(  G\right)  =P_{G}\left(
\mathbf{x}\right)  \leq t^{r-r/p}\lambda_{\min}^{\left(  p\right)  }\left(
K_{k}^{r}\right)  . \label{in3}%
\end{equation}

Now, select $\mathbf{x}\in\mathbb{R}^{n}$ with $\left\vert \mathbf{x}%
\right\vert _{p}=1$ and $P_{G}\left(  \mathbf{x}\right)  =\lambda_{\min
}^{\left(  p\right)  }\left(  G\right)  $, and define a vector $\mathbf{y}%
:=\left(  y_{1},\ldots,y_{k}\right)  $ by letting%
\[
y_{i}=t^{1/p-1}%
{\displaystyle\sum\limits_{j\in\left[  V_{i}\right]  }}
x_{j},\text{ \ \ \ }i=1,\ldots,n.
\]
The Power Mean inequality implies that%
\[
\left\vert \mathbf{y}\right\vert _{p}^{p}=%
{\displaystyle\sum\limits_{i\in\left[  k\right]  }}
\left\vert y_{i}\right\vert ^{p}\leq%
{\displaystyle\sum\limits_{i\in\left[  k\right]  }}
\left(
{\displaystyle\sum\limits_{j\in\left[  V_{i}\right]  }}
\left\vert x_{j}\right\vert t^{1/p-1}\right)  ^{p}\leq t^{1-p}%
{\displaystyle\sum\limits_{j\in\left[  V_{i}\right]  }}
t^{p-1}\left\vert x_{j}\right\vert ^{p}=1.
\]
On the other hand,
\[
\lambda_{\min}^{\left(  p\right)  }\left(  G\right)  =P_{G}\left(
\mathbf{x}\right)  =t^{r-r/p}P_{K_{k}^{r}}\left(  \mathbf{y}\right)  \geq
t^{r-r/p}\lambda_{\min}^{\left(  p\right)  }\left(  K_{k}^{r}\right)
\left\vert \mathbf{y}\right\vert _{p}^{r}\geq t^{r-r/p}\lambda_{\min}^{\left(
p\right)  }\left(  K_{k}^{r}\right)  .
\]
In view of (\ref{in3}), equation (\ref{eq1}) is proved.

The proof of (\ref{eq2}) goes along the same lines and is therefore omitted.
\end{proof}

\section{\label{cex}$2$-chromatic $4$-graphs with unbounded $\lambda\left(
G\right)  /\lambda_{\min}\left(  G\right)  $}

Let $n$ be an even integer and define a $4$-graph $G$ as follows: $G$ is of
order $2n$ and its vertex set is the union of two disjoint sets $A$ and $B$,
each of cardinality $n.$ The edges of $G$ consists of all $4$-sets
intersecting $A$ in exactly two vertices. Clearly, $G$ is $2$-chromatic.

\begin{proposition}
If $p\geq2,$ then
\begin{equation}
\lambda^{\left(  p\right)  }\left(  G\right)  =4!\binom{n}{2}^{2}\frac
{1}{\left(  2n\right)  ^{4/p}} \label{sr}%
\end{equation}
and
\begin{equation}
\lambda_{\min}^{\left(  p\right)  }\left(  G\right)  =\Omega\left(
n^{3-4p}\right)  . \label{lev}%
\end{equation}

\end{proposition}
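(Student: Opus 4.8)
The plan is to exploit the rigid structure of $G$: every edge meets $A$ in exactly two vertices, hence meets $B$ in exactly two vertices, so the polynomial form factorizes. For a set $S\subseteq V(G)$ and $\mathbf{x}\in\mathbb{R}^{2n}$ put $e_S(\mathbf{x}):=\sum_{\{i,j\}\subseteq S}x_ix_j=\tfrac12\big((\sum_{i\in S}x_i)^2-\sum_{i\in S}x_i^2\big)$. Grouping each edge by its two vertices in $A$ and its two in $B$ gives
\[
P_G(\mathbf{x})=4!\,e_A(\mathbf{x})\,e_B(\mathbf{x}),
\]
so both (\ref{sr}) and (\ref{lev}) become problems of optimizing a product of two symmetric quadratic forms, in disjoint blocks of $n$ variables, over the $\ell^p$ sphere of $\mathbb{R}^{2n}$. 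Throughout I would use the abbreviation $s:=\sum_{i\in A}|x_i|^p$ (so that $\sum_{i\in B}|x_i|^p=1-s$ when $|\mathbf{x}|_p=1$) together with four elementary estimates, each valid for an arbitrary real block $\mathbf{v}$ of length $n$: Maclaurin's inequality $e_S(\mathbf{v})\le\binom{n}{2}\big(\tfrac1n\sum_{i\in S}|v_i|\big)^2$; the crude lower bound $e_S(\mathbf{v})\ge-\tfrac12\sum_{i\in S}v_i^2$; the Power Mean inequality $\tfrac1n\sum_{i\in S}|v_i|\le\big(\tfrac1n\sum_{i\in S}|v_i|^p\big)^{1/p}$; and the norm comparison $\sum_{i\in S}v_i^2\le n^{1-2/p}\big(\sum_{i\in S}|v_i|^p\big)^{2/p}$, the last being where the hypothesis $p\ge2$ enters.

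For (\ref{sr}) I would argue that an arbitrary unit vector satisfies $P_G(\mathbf{x})\le4!\,e_A(|\mathbf{x}|)\,e_B(|\mathbf{x}|)$ (this covers every sign pattern of the two factors), then apply Maclaurin's inequality and the Power Mean inequality to each factor to get the bound $4!\binom{n}{2}^2n^{-4/p}\big(s(1-s)\big)^{2/p}$, and conclude from $s(1-s)\le\tfrac14$ and $n^{-4/p}4^{-2/p}=(2n)^{-4/p}$ that $P_G(\mathbf{x})\le4!\binom{n}{2}^2(2n)^{-4/p}$. Equality is attained by the constant vector $\mathbf{x}\equiv(2n)^{-1/p}$, for which $|\mathbf{x}|_p=1$ and $e_A(\mathbf{x})=e_B(\mathbf{x})=\binom{n}{2}(2n)^{-2/p}$, which proves (\ref{sr}).

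For (\ref{lev}) I would establish a matching lower and upper bound for $|\lambda_{\min}^{(p)}(G)|$, of common order $n^{3-4/p}$. For the lower bound on the magnitude I would use the vector that is $(2n)^{-1/p}$ on all of $A$, is $(2n)^{-1/p}$ on one half of $B$, and is $-(2n)^{-1/p}$ on the other half (this is where $n$ even is needed): it has $\ell^p$-norm $1$, has $e_A(\mathbf{x})=\binom{n}{2}(2n)^{-2/p}$, and has $e_B(\mathbf{x})=-\tfrac n2(2n)^{-2/p}$ since $\sum_{i\in B}x_i=0$, so $P_G(\mathbf{x})=-12\,n\binom{n}{2}(2n)^{-4/p}=-\Theta(n^{3-4/p})$ and hence $\lambda_{\min}^{(p)}(G)\le-\Theta(n^{3-4/p})$. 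For the upper bound on the magnitude I would take any unit vector $\mathbf{x}$ with $P_G(\mathbf{x})<0$; then $e_A(\mathbf{x})$ and $e_B(\mathbf{x})$ are nonzero of opposite signs, and since $G$ is invariant under interchanging $A$ and $B$ we may assume $e_B(\mathbf{x})\le0\le e_A(\mathbf{x})$. Bounding the positive factor by Maclaurin plus Power Mean, $e_A(\mathbf{x})\le\binom{n}{2}n^{-2/p}s^{2/p}$, and the negative factor in magnitude by the sharper estimate $-e_B(\mathbf{x})\le\tfrac12\sum_{i\in B}x_i^2\le\tfrac12 n^{1-2/p}(1-s)^{2/p}$, then multiplying and using $s(1-s)\le\tfrac14$ gives $|P_G(\mathbf{x})|\le12\cdot4^{-2/p}\binom{n}{2}n^{1-4/p}=O(n^{3-4/p})$, hence $\lambda_{\min}^{(p)}(G)\ge-O(n^{3-4/p})$. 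The two estimates give (\ref{lev}), and together with (\ref{sr}) and $\binom{n}{2}^2(2n)^{-4/p}=\Theta(n^{4-4/p})$ they yield $\lambda^{(p)}(G)/|\lambda_{\min}^{(p)}(G)|=\Theta(n)$, which grows with the order of $G$.

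The factorization is really the only idea; everything else is constrained optimization, so the difficulty is merely not to lose a power of $n$. The one delicate point is the upper bound on $|\lambda_{\min}^{(p)}(G)|$: bounding \emph{both} quadratic factors by the Maclaurin estimate would only give $O(n^{4-4/p})$, the same order as $\lambda^{(p)}(G)$, and hence nothing. One must use that the factor which happens to be negative is controlled by the much smaller quantity $\tfrac12\sum_{i\in B}x_i^2\le\tfrac12 n^{1-2/p}(1-s)^{2/p}$ rather than by $\binom{n}{2}n^{-2/p}(1-s)^{2/p}$. This asymmetry between the two blocks — legitimate precisely because the product is negative, and quantitatively the place where $p\ge2$ is used — is what lowers the exponent from $4-4/p$ to $3-4/p$ and makes the ratio unbounded.
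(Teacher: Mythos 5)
Your proposal is correct and follows essentially the same route as the paper: the factorization $P_G(\mathbf{x})=4!\,e_A(\mathbf{x})e_B(\mathbf{x})$, Maclaurin plus the Power Mean inequality for the maximum, and the asymmetric estimate for the minimum (the negative factor controlled by $\tfrac12\sum x_i^2=O(n^{1-2/p})$ via $p\ge2$, the positive factor by $O(n^{2-2/p})$), with the same test vectors up to the $A\leftrightarrow B$ symmetry. The only deviations are cosmetic, e.g.\ tracking $s$ and $1-s$ and using $s(1-s)\le\tfrac14$ instead of bounding each block by full-vector sums.
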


\begin{proof}
For convenience, let $A=\left[  n\right]  ,$ $B=\left[  2n\right]
\backslash\left[  n\right]  .$ Suppose that $\mathbf{x}=\left(  x_{1}%
,\ldots,x_{2n}\right)  $ is a vector with $\left\vert \mathbf{x}\right\vert
_{p}=1$. Set
\[
S_{2}\left(  A\right)  :=%
{\displaystyle\sum\limits_{1\leq i<j\leq n}}
x_{i}x_{j},\text{ \ \ }S_{2}\left(  B\right)  :=%
{\displaystyle\sum\limits_{n<i<j\leq2n}}
x_{i}x_{j},
\]
Clearly, $P_{G}\left(  \mathbf{x}\right)  =4!S_{2}\left(  A\right)
S_{2}\left(  B\right)  $.

Now, Maclaurin's inequality implies that%
\[
S_{2}\left(  A\right)  \leq\binom{n}{2}\left(  \frac{1}{n}%
{\displaystyle\sum\limits_{i\in A}}
\left\vert x_{i}\right\vert \right)  ^{2}\text{ \ and \ \ \ }S_{2}\left(
B\right)  \leq\binom{n}{2}\left(  \frac{1}{n}%
{\displaystyle\sum\limits_{i\in B}}
\left\vert x_{i}\right\vert \right)  ^{2},
\]
and further, the Power Mean inequality implies that
\begin{align*}
P_{G}\left(  \mathbf{x}\right)   &  \leq4!\binom{n}{2}^{2}\left(  \frac{1}{n}%
{\displaystyle\sum\limits_{i\in A}}
\left\vert x_{i}\right\vert \right)  ^{2}\left(  \frac{1}{n}%
{\displaystyle\sum\limits_{i\in B}}
\left\vert x_{i}\right\vert \right)  ^{2}\leq4!\binom{n}{2}^{2}\left(
\frac{1}{n}%
{\displaystyle\sum\limits_{i\in A}}
\left\vert x_{i}\right\vert ^{p}\right)  ^{2/p}\left(  \frac{1}{n}%
{\displaystyle\sum\limits_{i\in B}}
\left\vert x_{i}\right\vert ^{p}\right)  ^{2/p}\\
&  \leq4!\binom{n}{2}^{2}\frac{1}{\left(  2n\right)  ^{4/p}},
\end{align*}
yielding
\[
\lambda^{\left(  p\right)  }\left(  G\right)  \leq4!\binom{n}{2}^{2}\frac
{1}{\left(  2n\right)  ^{4/p}}.
\]
On the other hand, letting $\mathbf{y}:=\left(  y_{1},\ldots,y_{2n}\right)  $
be with $y_{1}=\cdots=y_{2n}=\left(  2n\right)  ^{-1/p},$ we see that
$\left\vert \mathbf{y}\right\vert _{p}=1$ and
\[
\lambda^{\left(  p\right)  }\left(  G\right)  \geq P_{G}\left(  \mathbf{y}%
\right)  =4!\binom{n}{2}^{2}\frac{1}{\left(  2n\right)  ^{4/p}}.
\]
This completes the proof of equation (\ref{sr}).

To prove (\ref{lev}), assume by symmetry that $S_{2}\left(  A\right)  <0$ and
$S_{2}\left(  B\right)  >0$. We see that
\[
2S_{2}\left(  A\right)  =\left(
{\displaystyle\sum\limits_{i\in A}}
x_{i}\right)  ^{2}-%
{\displaystyle\sum\limits_{i\in A}}
x_{i}^{2}>-%
{\displaystyle\sum_{i\in A\cup B}}
x_{i}^{2}>-2n\left(  \frac{1}{2n}%
{\displaystyle\sum_{i\in A\cup B}}
\left\vert x_{i}\right\vert ^{p}\right)  ^{2/p}=-\frac{2n}{\left(  2n\right)
^{2/p}},
\]
and that
\[
2S_{2}\left(  B\right)  =\left(
{\displaystyle\sum\limits_{i\in B}}
x_{i}\right)  ^{2}-%
{\displaystyle\sum\limits_{i\in B}}
x_{i}^{2}<\left(
{\displaystyle\sum_{i\in A\cup B}}
\left\vert x_{i}\right\vert \right)  ^{2}<4n^{2}\left(  \frac{1}{2n}%
{\displaystyle\sum_{i\in A\cup B}}
\left\vert x_{i}\right\vert ^{p}\right)  ^{2/p}=\frac{4n^{2}}{\left(
2n\right)  ^{2/p}}%
\]
Therefore,%
\[
P_{G}\left(  \mathbf{x}\right)  >-4!\frac{n}{\left(  2n\right)  ^{2/p}}%
\cdot\frac{2n^{2}}{\left(  2n\right)  ^{2/p}}=-\frac{4!2n^{3}}{\left(
2n\right)  ^{4/p}},
\]
and so,
\[
\lambda_{\min}^{\left(  p\right)  }\left(  G\right)  >-\frac{4!2n^{3}}{\left(
2n\right)  ^{4/p}}=O\left(  n^{3-4/p}\right)  .
\]
To prove that $\lambda_{\min}^{\left(  p\right)  }\left(  G\right)
=\Omega\left(  n^{3-4/p}\right)  $, recall that $n$ is even and define a
vector $\mathbf{y}:=\left(  y_{1},\ldots,y_{2n}\right)  $ by letting
\[
y_{i}=\left\{
\begin{array}
[c]{ll}%
-\left(  2n\right)  ^{-1/p} & \text{if }1\leq i\leq n/2\text{;}\\
\left(  2n\right)  ^{-1/p} & \text{if }n/2<i\leq2n\text{.}%
\end{array}
\right.
\]
Clearly $\left\vert \mathbf{y}\right\vert _{p}=1,$ and we find that
\[
\lambda_{\min}^{\left(  p\right)  }\left(  G\right)  \leq P_{G}\left(
\mathbf{y}\right)  =-4!\cdot\frac{1}{2}%
{\displaystyle\sum_{i\in\left[  n\right]  }}
y_{i}^{2}%
{\displaystyle\sum\limits_{n<i<j\leq2n}}
y_{i}y_{j}=-\frac{4!n}{2\left(  2n\right)  ^{2/p}}\cdot\binom{n}{2}\frac
{1}{\left(  2n\right)  ^{2/p}}=\Omega\left(  n^{3-4/p}\right)  .
\]
Hence, (\ref{lev}) holds as well, completing the proof of Proposition
\ref{pro1}.
\end{proof}

We see that the graphs $G$ satisfy
\[
\frac{\lambda^{\left(  p\right)  }\left(  G\right)  }{|\lambda_{\min}^{\left(
p\right)  }\left(  G\right)  |}=\Omega\left(  n\right)  ,
\]
so the ratio $\lambda^{\left(  p\right)  }\left(  G\right)  /|\lambda_{\min
}^{\left(  p\right)  }\left(  G\right)  |$ cannot be bounded from above by a
function of the chromatic number of $G$, which is always $2.$

\section{\textbf{Concluding remarks and open problems}}

One can see that Theorem \ref{th1} trivially holds for odd $r$ as well, since
$\lambda_{\min}^{\left(  p\right)  }\left(  G\right)  =-\lambda^{\left(
p\right)  }\left(  G\right)  $ for any $r$-graph $G$ if $r$ is odd. At the
same time we could not find any conspicuous obstacle to meaningful extensions
of the theorem for odd $r$ and for $1\leq p<r$.

Thus, we finish with two open problems:

\begin{problem}
How can Theorem \ref{th1} be meaningfully extended for odd $r$?
\end{problem}

\begin{problem}
Does Theorem \ref{th1} hold for $1\leq p<r$ and even $r$?
\end{problem}

It should be emphasized that in the above problems only tight bounds on
$\lambda^{\left(  p\right)  }\left(  G\right)  /\lambda_{\min}^{\left(
p\right)  }\left(  G\right)  $ would really matter, as it is not too difficult
to prove loose bounds depending solely on the partition number of $G$.

Finally, it would be interesting to determine for which graphs $G$ equality
holds in (\ref{rhin}) in the spirit of Lemma 9.6.2 of \cite{GoRo01}.

\end{document}